\theoremstyle{plain}
\newtheorem{theorem}{Theorem}[section]
\newtheorem{remark}{\bf Remark}[section]
\theoremstyle{definition}
\newcommand{\rot}{\mathop\mathrm{rot}}
\newcommand{\grad}{\mathop\mathrm{grad}}
\renewcommand{\div}{\mathop\mathrm{div}}
\newcommand{\Tr}{\mathop\mathrm{Tr}}
\title
[Lieb--Thirring inequalities on the sphere ]
 {Lieb--Thirring inequalities on the sphere}
\author[A.Ilyin and A.Laptev] {Alexei  Ilyin and Ari Laptev}
\begin{document}
 \begin{abstract}
 We prove on the sphere $\mathbb{S}^2$ the Lieb--Thirring
inequalities for orthonormal families of  scalar and vector functions both on the whole sphere and
on proper domains on  $\mathbb{S}^2$. By way of applications we
obtain an explicit estimate for the dimension of the attractor
of the Navier--Stokes system on a domain on the sphere with Dirichlet
non-slip boundary conditions.
\end{abstract}

\subjclass[2010]{35P15, 26D10, 35Q30.}
\keywords{Lieb--Thirring inequalities, Spectral inequalities on the sphere,
Navier--Stokes equations, Attractors}

\address
{\noindent\newline  Keldysh Institute of Applied Mathematics;
\newline
Imperial College London and Institute Mittag--Leffler;}
\email{ ilyin@keldysh.ru;  a.laptev@imperial.ac.uk}

\maketitle

\setcounter{equation}{0}
\section{Introduction}\label{sec1}
\label{sec0}

The Schr\"odinger operator in $L_2(\mathbb{R}^d)$
$$
-\Delta-V
$$
with a real-valued  potential $V$ sufficiently fast decaying at infinity
has a discrete  negative
spectrum satisfying the Lieb--Thirring
spectral inequalities~\cite{LT}, \cite{lthbook}, \cite{L-W}
\begin{equation}\label{LT}
\sum_{\lambda_i\le0}|\lambda_i|^\gamma\le\mathrm{L}_{\gamma,d}
 \int_{\mathbb{R}^d} V_+(x)^{\gamma+ d/2}dx,
\end{equation}
where $V_+(x)=(|V(x)|+ V(x))/2$ is the positive part of $V$.

For $\gamma\ge 3/2$ the sharp value of the Lieb--Thirring constants  $\mathrm{L}_{\gamma,d}$
was found in~\cite{Lap-Weid}:
\begin{equation}\label{Acta}
\mathrm{L}_{\gamma,d}= \mathrm{L}_{\gamma,d}^{\mathrm{cl}}:=
\frac1{(2\pi)^d}\int_{\mathbb{R}^d}(1-|\xi|^2)_+^\gamma dx=
\frac{\Gamma(\gamma+1)}
{(4\pi)^{d/2}\Gamma(d/2+\gamma+1)}\,.
\end{equation}

For $1\le\gamma<3/2$ the best known estimate of $\mathrm{L}_{\gamma,d}$
was obtained in \cite{D-L-L}:
\begin{equation}\label{DLL}
\mathrm{L}_{\gamma,d}\le \frac\pi{\sqrt{3}}
\cdot\mathrm{L}_{\gamma,n}^{\mathrm{cl}},
\quad \frac\pi{\sqrt{3}}=1.8138\dots\,.
\end{equation}

The proof of \eqref{Acta} is essentially based on the following two
ingredients, while the third one, in addition, is essential for the proof of~\eqref{DLL}:
\begin{itemize}
  \item[] 1). Sharp one-dimensional  Lieb--Thirring estimates
  for  the Scrh\"odin\-ger operators on $\mathbb{R}$ with matrix-valued potentials for $\gamma\ge3/2$~\cite{Lap-Weid}.
  \item[] 2). The Cartesian coordinates in $\mathbb{R}^d$, which make it possible to
  use the  lifting argument with respect to dimensions \cite{LapFA}, \cite{Lap-Weid}.
  \item[] 3). One-dimensional  Lieb--Thirring estimates
  for  the Scrh\"odinger operators on $\mathbb{R}$ with matrix-valued potentials for
$\gamma=1$, or, equivalently, one-dimensional generalized Sobolev inequality for traces of matrices~\cite{D-L-L}, \cite{E-F}.
\end{itemize}

We are interested in this work in Lieb--Thirring inequalities on the sphere.
Lieb--Thirring inequalities were generalized in \cite{G-M-T}, \cite{T}
to higher-order operators and Riemannian manifolds, however, the method
involves extension operators for Sobolev spaces and therefore no information
is available on the corresponding constants.

Explicit bounds for the Lieb--Thirring constants on the sphere
were found in \cite{ILMS} by means of the Birman--Schwinger kernel,
the constant for $\mathbb{S}^2$ and $\gamma=1$ was improved in \cite{I12JST}.
The same  bound for the two-dimensional torus (with equal periods)
was also obtained in \cite{I12JST}.

The case of multi-dimensional anisotropic torus $\mathbb{T}^d$ (a torus  with different
periods) was studied in~\cite{I-L-MS}. The natural condition
that the functions have zero mean value  over the whole torus was replaced
 by a stronger condition that  the functions have zero mean value over the shortest period
uniformly  for all remaining coordinates. Under this condition
 it was shown that independently of the ratios of the periods it holds
\begin{equation}\label{ILMS}
\mathrm{L}_{\gamma,d}(\mathbb{T}^d)\le \left(\frac\pi{\sqrt{3}}\right)^d
\cdot\mathrm{L}_{\gamma,n}^{\mathrm{cl}},\quad\gamma\ge1.
\end{equation}

Here we clearly have the Cartesian coordinates on $\mathbb{T}^d$, and,
furthermore,
one-dimensional  Lieb--Thirring estimates
for  the Scrh\"odinger operators   with matrix-valued potentials in
the periodic case~\cite{IMRN}, \cite{I-L-MS}. However, sharp semiclassical
Lieb--Thirring inequalities for $\gamma\ge3/2$ are
not known to hold for the periodic boundary conditions even
for scalar potentials, which leads, unlike~\eqref{DLL},  to the accumulation
along with the dimension of the factor
$\pi/\sqrt{3}$ in~\eqref{ILMS}.

As far as the sphere is concerned, none of the three points mentioned above is available.

In this work we consider Lieb--Thirring inequalities
in the dual formulation for orthonormal families on the two-dimensional sphere
$\mathbb{S}^2$ and the torus $\mathbb{T}^2$ (with equal periods)
by using the method of
 \cite{Rumin1}, \cite{Rumin2} (see also \cite{Frank}, \cite{lthbook}).
In the case of $\mathbb{R}^d$ this approach does not give the best
to date estimates of the constants obtained in \cite{D-L-L},
however, as we shall see, this  approach is very general and
flexible, and   works nicely in the case of the sphere producing
rather good constants in the Lieb--Thirring inequalities. We
restrict ourselves to the most important case of $\mathbb{S}^2$ and
consider the Lieb--Thirring inequalities in the following three
settings and prove in Section~\ref{sec2} that:
\begin{itemize}
  \item[] for an orthonormal family  $\{\psi_j\}_{j=1}^N\in
      H^1(\mathbb{S}^2)$ on the whole sphere with
      $\int_{\mathbb{S}^2}\psi_jdS=0$ it holds
\begin{equation}\label{item1}
\int_{\mathbb{S}^2}\biggl(\sum_{j=1}^N\psi_j(s)^2\biggr)^2dS\le\frac9{4\pi}
\sum_{j=1}^N\|\nabla\psi_j\|^2;
\end{equation}
\medskip
  \item[]
for an orthonormal family of  vector functions
$\{u_j\}_{j=1}^N\in H^1_0(\Omega,T\mathbb{S}^2)$
with supports in a domain $\Omega\subseteq\mathbb{S}^2$
\begin{equation}\label{item2}
\int_{\Omega}\biggl(\sum_{j=1}^N|u_j(s)|^2\biggr)^2dS\le\frac9{2\pi}
\sum_{j=1}^N\bigl(\|\div u_j\|^2+\|\rot u_j\|^2\bigr),
\end{equation}
where the constant goes over to $9/(4\pi)$ if $\div u_j=0$ (or $\rot u_j=0$);
\medskip
  \item[]
 for an orthonormal family $\{\psi_j\}_{j=1}^N\in H^1_0(\Omega)$,
$\Omega\subset\mathbb{S}^2$
\begin{equation}\label{item3}
\int_{\Omega}\biggl(\sum_{j=1}^N\psi_j(s)^2\biggr)^2dS\le\frac2{\pi}\,\frac{4\pi+|\Omega|}{4\pi-|\Omega|}
\sum_{j=1}^N\|\nabla\psi_j\|^2.
\end{equation}

\end{itemize}

We observe that the constants in \eqref{item1},\eqref{item2}
significantly improve the previously known estimate of the
constants~\cite{I12JST}, which was $3/2$. Also, the  constant in
\eqref{item3} blows up as $\Omega\to\mathbb{S}^2$,
since on the whole sphere the Lieb--Thirring inequality cannot hold
without the exclusion of the zero mode.
Sharp estimates for the first eigenvalue of
the Schr\"odinder operator on $\mathbb{S}^{d-1}$ when  the zero mode
is important
were obtained in \cite{D-E-L}.

In the same framework we consider in the end of Section~\ref{sec2}
the two-dimensional square torus $\mathbb{T}^2$ and show that for
an orthonormal family with orthogonality to constants it holds
\begin{equation}\label{itemtor}
\int_{\mathbb{T}^2}\biggl(\sum_{j=1}^N\psi_j(x)^2\biggr)^2dx\le\frac6{\pi^2}
\sum_{j=1}^N\|\nabla\psi_j\|^2.
\end{equation}

In Section~\ref{sec3}  we give applications of the obtained results
(namely, inequality~\eqref{item2})  to the Navier--Stokes system
in a domain $\Omega$ on $\mathbb{S}^2$ with Dirichlet boundary
conditions. The system reads:
\begin{equation}\label{NS}
\begin{cases}
\partial_t u+\nabla_uu+\nabla p=\nu \boldsymbol{\Delta} u+g,\\
u\big|_{t=0}=u^0,\ \ \div u=0,\ \ \ u\big|_{\partial\Omega}=0.
\end{cases}
\end{equation}
In the case $\Omega\subset\mathbb{R}^2$ we clearly have
$\nabla_v u=\sum_{i=1}^2v^i\partial_iu$, while on the sphere
$\nabla_vu$ is the covariant derivative of $u$
along $v$, and $\boldsymbol{\Delta}$ is the vector Laplace--de Rham
operator.

There is a vast literature devoted to the analysis of the
long time behaviour of the solutions of this system
described in terms of global attractors,
see, for instance, \cite{B-V}, \cite{T}
and the references therein. A global attractor $\mathscr A$ is a
compact strictly invariant set which attracts
bounded sets in the corresponding phase space as $t\to\infty$.
An important geometric characterization of the attractor
is its finite (Hausdorff and fractal) dimension.
For more than thirty years the estimate in \cite{T}
\begin{equation}\label{T85}
\dim_F\mathscr{A}\le c(\Omega)\frac{\|f\|\,|\Omega|}{\nu^2}\,,
\qquad\Omega\subset \mathbb{R}^2
\end{equation}
remains
the best available estimate in terms of the physical parameters:
the viscosity coefficient $\nu$, the size of the domain $|\Omega|=\mathrm{meas}(\Omega)$,
and the magnitude of the forcing term $\|f\|=\|f\|_{L_2(\Omega)}$.
We also point of the important paper \cite{Lieb} in this connection.
Finally, this estimate was written in an explicit form in~\cite{Il-Stokes}:
\begin{equation}\label{dimR2}
\dim_F\mathscr{A}\le\frac1{4\pi 3^{1/4}}\,\frac{\|f\||\Omega|}{\nu^2}=
0.060\ldots\cdot\frac{\|f\||\Omega|}{\nu^2}\,,
\qquad\Omega\subset \mathbb{R}^2.
\end{equation}

We show in Section~\ref{sec3} that in the case of $\Omega\subset\mathbb{S}^2$
the estimate of the type \eqref{T85} still holds,
and, furthermore, can also be written in an explicit form
\begin{equation}\label{dimS2}
\dim_F\mathscr{A}\le\frac3{2(2\pi)^{3/2}}\,\frac{\|f\||\Omega|}{\nu^2}=
0.095\ldots\cdot\frac{\|f\||\Omega|}{\nu^2}\,,
\qquad\Omega\subset \mathbb{S}^2.
\end{equation}
To prove \eqref{dimS2}, in addition to~\eqref{item2} we use the
following Li--Yau type  bound
\begin{equation}\label{LiYau}
\sum_{k=1}^n\lambda_k\ge\frac{2\pi}{|\Omega|}\,n^2
\end{equation}
for the eigenvalues $\lambda_k$ of the Stokes problem
on $\Omega\subset\mathbb{S}^2$:
\begin{equation}\label{Stokessmooth}
\aligned
&-\mathbf{\Delta} u_j \,+\, \nabla p_j\,=\,\lambda_ju_j,\\
&\div u_j\,=\,0,\,\,\,u_j\vert_{\partial\Omega }\,=\,0,
\endaligned
\end{equation}
which was  recently obtained in~\cite{I-L-IMRN} and is exactly
the same as in the case $\Omega\subset\mathbb{R}^2$ \cite{Il-Stokes}.

\setcounter{equation}{0}
\section{Lieb--Thirring inequalities on  $\mathbb{S}^2$ and $\mathbb{T}^2$}\label{sec2}

We first  recall the basic facts concerning the Laplace operator on
the sphere $\mathbb{S}^{2}$~(see, for instance,
\cite{S-W}).  We have for the (scalar) Laplace--Beltrami
operator $\Delta=\div\nabla$:
\begin{equation}\label{harmonics}
-\Delta Y_n^k=n(n+1) Y_n^k,\quad
k=1,\dots,2n+1,\quad n=0,1,2,\dots.
\end{equation}
Here the $Y_n^k$ are the orthonormal real-valued spherical
harmonics and each eigenvalue $\Lambda_n:=n(n+1)$ has multiplicity $2n+1$.

The following identity is essential in what
follows~\cite{Mikhlin}, \cite{S-W}: for any $s\in\mathbb{S}^{2}$
\begin{equation}\label{identity}
\sum_{k=1}^{2n+1}Y_n^k(s)^2=\frac{2n+1}{4\pi}.
\end{equation}
 This identity, in turn, follows from  the Laplace addition theorem for the
spherical harmonics on $\mathbb{S}^2$:
\begin{equation}\label{Lapl-add}
\sum_{k=1}^{2n+1}Y_n^k(s)Y_n^k(s_0)=\frac{2n+1}{4\pi}\,
P_n(s\cdot s_0),
\end{equation}
where $P_n(x)$, $x\in[-1,1]$ are the classical Legendre
polynomials.

\subsection*{The scalar case} We first consider the scalar case.
We exclude the zero mode and introduce the following notation
labelling the eigenfunctions and the corresponding eigenvalues with
a single subscript counting multiplicities: $-\Delta y_i=\lambda_i
y_i$, where
\begin{equation}\label{not}
\{y_i\}_{i=1}^\infty=\{
Y_n^1,\dots,Y_n^{2n+1}\}_{n=1}^\infty,\quad
\{\lambda_i\}_{i=1}^\infty=
\underset{\!2n+1\ \text{times}}{\{\Lambda_n,
\dots,\Lambda_n\}_{n=1}^{\infty}}.
\end{equation}

\begin{theorem}\label{Th:S2}
Let $\{\psi_j\}_{j=1}^N\in H^1(\mathbb{S}^2)$ be an orthonormal family of scalar functions
with zero average: $\int_{\mathbb{S}^2}\psi_j(s)dS=0$. Then $\rho(s):=\sum_{j=1}^N\psi_j(s)^2$
satisfies the inequality
\begin{equation}\label{LTS2}
\int_{\mathbb{S}^2}\rho(s)^2dS\le\frac9{4\pi}\sum_{j=1}^N\|\nabla\psi_j\|^2.
\end{equation}
\end{theorem}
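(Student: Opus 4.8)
The plan is to use the dual (one-particle density) formulation together with Rumin's method. Writing $P=\sum_{j=1}^N|\psi_j\rangle\langle\psi_j|$ for the orthogonal projection onto $V:=\mathrm{span}\{\psi_j\}$, the density is the diagonal of its kernel, $\rho(s)=P(s,s)$. For each fixed $s$ I would introduce the representer $v_s:=P\delta_s=\sum_j\psi_j(s)\psi_j\in V$; since $P$ is a projection one gets the two identities $v_s(s)=\|v_s\|^2=\rho(s)$ and, after integrating in $s$, $\int_{\mathbb{S}^2}\langle-\Delta v_s,v_s\rangle\,dS=\sum_j\|\nabla\psi_j\|^2$. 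Thus everything reduces to controlling the single scalar $v_s(s)$ by the local energy $\langle-\Delta v_s,v_s\rangle$ in a way that survives integration.

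Next, I would expand in spherical harmonics, $v_s=\sum_{n\ge1}P_nv_s$, where $P_n$ projects onto the eigenspace $\Lambda_n=n(n+1)$, the sum starting at $n=1$ because the zero-average hypothesis kills the constant mode. The one ingredient special to the sphere is the addition identity \eqref{identity}, which gives the reproducing-kernel bound $|(P_nv_s)(s)|\le\|P_nv_s\|\,\sqrt{(2n+1)/(4\pi)}$ with a constant independent of $s$; this homogeneity is what ultimately yields a clean numerical factor. Setting $b_n:=\|P_nv_s\|^2$, so that $\sum_n b_n=\rho(s)$ and $\sum_n\Lambda_n b_n=\langle-\Delta v_s,v_s\rangle$, I would split the frequency sum at a level $M=M(s)$: on the low block $n\le M$ apply Cauchy--Schwarz against the mass, using the exact count $\sum_{n=1}^M(2n+1)=(M+1)^2-1$, to bound it by $\sqrt{\rho(s)}\cdot\sqrt{((M+1)^2-1)/(4\pi)}$; on the high block $n>M$ I would weight by $\Lambda_n$ so as to bring in the energy.

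I expect the high block to be the crux, and this is exactly where Rumin's method is needed. A single spectral cutoff fails: pairing against the energy by Cauchy--Schwarz produces the divergent tail $\sum_{n>M}\frac{2n+1}{\Lambda_n}=\sum_{n>M}\big(\tfrac1n+\tfrac1{n+1}\big)=\infty$, the manifestation of the critical two-dimensional borderline (indeed $H^1(\mathbb{S}^2)\not\hookrightarrow L^\infty$, so no pointwise estimate $\rho(s)^2\lesssim\langle-\Delta v_s,v_s\rangle$ can hold). The gain must instead be collective, coming from orthonormality of the family. Rumin's device is to distribute the cutoff continuously over energy scales, a balanced layer-cake in space and frequency, so that each dyadic frequency shell contributes only a bounded multiple of its share of the energy; summing these contributions and integrating in $s$ against the global energy budget turns the would-be logarithm into the finite constant. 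Concretely I would optimize $M(s)$ pointwise, balancing the mass and energy blocks, and then integrate over $\mathbb{S}^2$, at which stage $\int\langle-\Delta v_s,v_s\rangle\,dS=\sum_j\|\nabla\psi_j\|^2$ enters and the exact spherical multiplicities, rather than their Weyl asymptotics, pin down the coefficient.

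The remaining work is bookkeeping: carrying out the pointwise optimization and the scale summation to verify that the accumulated constant is exactly $9/(4\pi)$, and checking that the discreteness of the spectrum and the shift from excluding $n=0$ do not degrade it. I would double-check the final value on the extremal configuration suggested by the optimization, namely mass concentrated near the bottom of the spectrum at $n\approx1$, since that is where the factor $9$ should originate and where any loss in the scale summation would first show up.
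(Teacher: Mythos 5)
Your setup coincides with the paper's proof in all but the decisive step: the representer $v_s=P\delta_s$ with $v_s(s)=\|v_s\|^2=\rho(s)$ is the pointwise version of the paper's localization $\|\varGamma\chi_B\|_{\mathrm{HS}}$ followed by $|B|\to0$; the addition identity \eqref{identity} enters in the same way; and the count $\sum_{n=1}^{M}(2n+1)=(M+1)^2-1$ is exactly the quantity the paper bounds by $\tfrac32E$, with the extremum at $E=\Lambda_1=2$ --- so your guess about where the factor $9$ originates is correct. You also correctly diagnose the obstruction: the high-frequency block cannot be paired against the energy pointwise, since $\sum_{n>M}(2n+1)/\Lambda_n$ diverges.

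The gap is that you never execute the step that overcomes this obstruction, and the one concrete instruction you do give --- ``optimize $M(s)$ pointwise, balancing the mass and energy blocks'' --- is precisely the single-cutoff scheme you have just shown to fail: for one fixed cutoff there is no pointwise bound of the high block by $\langle-\Delta v_s,v_s\rangle$, so there is nothing to balance. Rumin's device is not an optimization over one cutoff but an integration over all of them. For every $E$ the low-block estimate alone yields
\begin{equation*}
\rho(s)^{1/2}\le (CE)^{1/2}+\widetilde\rho_E(s)^{1/2},\qquad
\widetilde\rho_E(s):=\sum_{j=1}^N\bigl(P_E^\perp\psi_j\bigr)(s)^2,\qquad C=\tfrac{3}{8\pi},
\end{equation*}
(divide $v_s(s)=(P_Ev_s)(s)+(P_E^\perp v_s)(s)$ by $\rho(s)^{1/2}$ after Cauchy--Schwarz), hence $\widetilde\rho_E(s)\ge\bigl(\rho(s)^{1/2}-(CE)^{1/2}\bigr)_+^2$ for all $E\ge0$. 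The energy enters only after integrating this one-parameter family of bounds in $E$: the layer-cake identity $-\dot\Delta=\int_0^\infty P_E^\perp\,dE$ gives $\int_{\mathbb{S}^2}\int_0^\infty\widetilde\rho_E(s)\,dE\,dS=\sum_j\|\nabla\psi_j\|^2$, while $\int_0^\infty\bigl(\rho^{1/2}-(CE)^{1/2}\bigr)_+^2dE=\rho^2/(6C)=\tfrac{4\pi}{9}\rho^2$, which is \eqref{LTS2}. Without this integration in $E$ your argument stalls at the high block, so as written the proof is incomplete rather than wrong in conception.
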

\begin{proof} For $E\ge0$  we define the spectral
projections
$$
P_E=\sum_{\lambda_j<E}(\cdot,y_j)y_j\quad\text{and}\quad
P_E^\perp=\sum_{\lambda_j\ge E}(\cdot,y_j)y_j.
$$
We denote by $\dot\Delta$ the Laplace operator restricted to
the invariant subspace of functions orthogonal to constants, that is,
$\dot\Delta=P^\perp_2\Delta P^\perp_2$. Then
$$
-\dot\Delta=\sum_{j=1}^\infty\lambda_j(\cdot,y_j)y_j.
$$
Next, since
$$
\sum_{j=1}^\infty\lambda_ja_j=
\lambda_1(a_1+a_2+\dots)+(\lambda_2-\lambda_1)(a_2+a_3+\dots)+\dots=
\int_0^\infty\sum_{\lambda_j\ge E} a_jdE,
$$
it follows that
\begin{equation}\label{Eperp}
-\dot\Delta=\int_0^\infty\sum_{\lambda_j\ge E}(\cdot,y_j)y_jdE=\int_0^\infty P_E^\perp dE.
\end{equation}
Given the orthonormal family $\{\psi_j\}_{j=1}^N$, let $\varGamma$
be the finite rank orthogonal projection
$$
\varGamma=\sum_{j=1}^N(\cdot,\psi_j)\psi_j.
$$
Then $\Tr(-\dot\Delta)\varGamma=\Tr(-\Delta)\varGamma=\sum_{j=1}^N\|\nabla\psi_j\|^2$, which is on the right-hand
side in~\eqref{LTS2}. On the other hand, by the cyclic property of the trace and~\eqref{Eperp}
we have
\begin{equation}\label{intint}
\aligned
\sum_{j=1}^N\|\nabla\psi_j\|^2=
\Tr(-\dot\Delta)\varGamma=\Tr\varGamma(-\dot\Delta)\varGamma=\\=
\int_0^\infty\Tr\varGamma P_E^\perp\varGamma dE=
\int_0^\infty\Tr P_E^\perp\varGamma P_E^\perp dE=\\=
\int_0^\infty\int_{\mathbb{S}^2}\rho_{P_E^\perp\varGamma P_E^\perp}(s)dSdE=
\int_{\mathbb{S}^2}\int_0^\infty\rho_{P_E^\perp\varGamma P_E^\perp}(s)dE dS,
\endaligned
\end{equation}
where
$$
\rho_{P_E^\perp\varGamma P_E^\perp}(s)=\sum_{j=1}^N\|P_E^\perp\psi_j\|^2\psi_j(s)^2.
$$
Now let $B$ be  a spherical cap around a point $a\in\mathbb{S}^2$
with area $|B|$ and let
$\chi_B$ be the corresponding characteristic function. Also, let $E\ge2\,(=\lambda_1=\Lambda_1)$. Then denoting by
$\|\cdot\|_{\mathrm{HS}}$ the Hilbert--Schmidt norm we obtain
\begin{equation}\label{|B|}
\aligned
\left(\int_B\rho(s)dS\right)^{1/2}=\|\varGamma\,\chi_B\|_{\mathrm{HS}}\le
\|\varGamma P_E\chi_B\|_{\mathrm{HS}}+\|\varGamma P_E^\perp \chi_B\|_{\mathrm{HS}}=\\=
\|\varGamma P_E\chi_B\|_{\mathrm{HS}}+\left(\int_B\rho_{P_E^\perp\varGamma P_E^\perp}(s) dS\right)^{1/2}.
\endaligned
\end{equation}
Using that $\|\varGamma\|=1$ and then the fact that both $\chi_B$ and $P_E$ are projections,
we find that
\begin{equation}\label{2nplus1}
\aligned
\|\varGamma P_E\chi_B\|_{\mathrm{HS}}^2\le\| P_E\chi_B\|_{\mathrm{HS}}^2=
\sum_{\lambda_j<E}\int_{\mathbb{S}^2}y_j(s)^2\chi_B(s)dS=\\=
\sum_{n=1}^{n(n+1)<E}\int_{\mathbb{S}^2}\sum_{k=1}^{2n+1}Y_n^k(s)^2\chi_B(s)dS=
\frac1{4\pi}|B|\sum_{n=1}^{n(n+1)<E}(2n+1),
\endaligned
\end{equation}
where we used the key identity~\eqref{identity} at the last step.

Let $n(E)=(\sqrt{1+4E}-1)/2$ be the positive root of the quadratic equation $n(n+1)=E$
and let $[n(E)]$ be the integer part of it. We estimate the sum on the right-hand side
$$
\aligned
\sum_{n=1}^{n(n+1)<E}(2n+1)=([n(E)]+1)^2-1\le (n(E)+1)^2-1=\\=
\frac{4E+2\sqrt{1+4E}-2}4\le E\max_{E\ge2}\frac{4E+2\sqrt{1+4E}-2}{4E}=\frac32E.
\endaligned
$$
Substituting this into \eqref{|B|} and letting $|B|\to0$ we obtain
$$
\rho(a)^{1/2}\le C^{1/2}E^{1/2}+\rho_{P_E^\perp\varGamma P_E^\perp}(a)^{1/2},\quad
C=\frac3{8\pi}\,.
$$
Since $\rho_{P_E^\perp\varGamma P_E^\perp}\ge0$ we have actually shown that
\begin{equation}\label{estwithplus}
\rho_{P_E^\perp\varGamma P_E^\perp}(a)\ge \bigl(\rho(a)^{1/2}-C^{1/2}E^{1/2}\bigl)^2_+.
\end{equation}
We derived this lower bound under the condition that $E\ge2$. However, since
$P_E^\perp=I$ for $0\le E <2$ and $\rho_{P_E^\perp\varGamma P_E^\perp}=\rho$,
lower bound \eqref{estwithplus} holds for all $E\ge0$. Integrating with respect to $E$ we obtain
$$
\int_0^\infty\rho_{P_E^\perp\varGamma P_E^\perp}(a)dE
\ge \int_0^\infty \bigl(\rho(a)^{1/2}-C^{1/2}E^{1/2}\bigl)^2_+dE=
\frac1{6C}\rho(a)^2=\frac{4\pi}{9}\rho(a)^2.
$$
Finally, integration over $\mathbb{S}^2$ in \eqref{intint} completes the proof:
$$
\sum_{j=1}^N\|\nabla\psi_j\|^2\ge \frac{4\pi}{9} \int_{\mathbb{S}^2}\rho(s)^2dS.
$$
\end{proof}

\subsection*{The vector case}

In the vector case we have the identity for the gradients of
spherical harmonics that is similar to~\eqref{identity} (see
\cite{I93}): for any $s\in\mathbb{S}^{2}$
\begin{equation}\label{identity-vec}
\sum_{k=1}^{2n+1}|\nabla Y_n^k(s)|^2=n(n+1)\frac{2n+1}{4\pi}.
\end{equation}

This identity is essential for inequalities for vector
functions on $\mathbb{S}^2$. Substituting $\varphi(s)=Y_n^k(s)$ into the identity
$$
\Delta\varphi^2=2\varphi\Delta\varphi+2|\nabla\varphi|^2
$$
we sum the results over $k=1,\dots,2n+1$. In view of
\eqref{identity} the left-hand side vanishes and we
obtain~\eqref{identity-vec} since the $Y_n^k(s)$'s are the
eigenfunctions corresponding to $\Lambda_n$.

In the vector case we first define the Laplace operator
acting on (tangent) vector
fields on $\mathbb{S}^2$ as the Laplace--de Rham
operator $-d\delta-\delta d$ identifying $1$-forms and
vectors. Then for a two-dimensional manifold
(not necessarily $\mathbb{S}^2$) we have
\cite{I90, I93}
\begin{equation}\label{vecLap}
\mathbf{\Delta} u=\nabla\div u-\rot\rot u,
\end{equation}
where the operators $\nabla=\grad$ and $\div$ have the
conventional meaning. The operator $\rot$ of a vector $u$ is a
scalar  and for a scalar $\psi$,
$\rot\psi$ is a vector:
$$
\rot u:=-\div(n\times u),\qquad
\rot\psi:=-n\times\nabla\psi,
$$
where $n$ is the unit outward normal vector, and in the local
frame $n\times u=(-u_2,u_1)$.

 Integrating by parts
we obtain
\begin{equation}\label{byparts}
(-\mathbf{\Delta} u,u)_{L_2(T\mathbb{S}^2)}=\|\rot u\|^2+\|\div u\|^2.
\end{equation}

The vector Laplacian has a complete in $L_2(T\mathbb{S}^2)$ orthonormal basis
of vector eigenfunctions. Using notation \eqref{not}
we have
\begin{equation}\label{bas-vec}
\aligned
-\mathbf{\Delta} w_j=\lambda_j w_j,\qquad -\mathbf{\Delta}
v_j=\lambda_j v_j,
\endaligned
\end{equation}
where
$$
 w_j=\lambda_j^{-1/2}n\times\nabla y_j,\ \ \div w_j=0,\qquad
v_j=\lambda_j^{-1/2}\nabla y_j,\ \ \rot v_j=0.
$$

Hence,  on $\mathbb{S}^2$, corresponding
to the eigenvalue
$\Lambda_n=n(n+1)$, where $n=1,2,\dots$, there are two families of $2n+1$
orthonormal vector-valued  eigenfunctions  $w_n^k(s)$ and $v_n^k(s)$, where
$k=1,\dots,2n+1$ and~(\ref{identity-vec}) gives the
following important identities: for any $s\in\mathbb{S}^2$
\begin{equation}\label{id-vec}
\sum_{k=1}^{2n+1}|w_n^k(s)|^2=\frac{2n+1}{4\pi},\qquad
\sum_{k=1}^{2n+1}|v_n^k(s)|^2=\frac{2n+1}{4\pi}.
\end{equation}
We finally observe that since the sphere is simply connected,  $-\mathbf{\Delta}$ is strictly
positive $-\mathbf{\Delta}\ge \Lambda_1I=2I.$

\begin{theorem}\label{Th:LT-vec}
Let $\{u_j\}_{j=1}^N\in H^1(T\mathbb{S}^2)$
be an  orthonormal family  of vector fields in $L^2(T\mathbb{S}^2)$.  Then
\begin{equation}\label{orthvec}
\int_{\mathbb{S}^2}\rho(s)^2dS\le
 \frac9{2\pi}\sum_{j=1}^N(\|\rot u_j\|^2
+\|\div u_j\|^2),
\end{equation}
where $\rho(s)=\sum_{j=1}^N|u_j(s)|^2$.
If, in addition, $\div u_j=0$ $($or $\rot u_j=0$$)$,
then
\begin{equation}\label{orthvecsol}
\int_{\mathbb{S}^2}\rho(s)^2dS\le\frac9{4\pi}\cdot
\begin{cases}\displaystyle
\sum_{j=1}^N\|\rot u_j\|^2,
\quad \ \div u_j=0,
\\\displaystyle
\sum_{j=1}^N\|\div u_j\|^2,
\quad \rot u_j=0.
\end{cases}
\end{equation}
\end{theorem}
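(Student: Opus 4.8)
The plan is to follow the proof of Theorem~\ref{Th:S2} almost verbatim, the only structural change being that each eigenvalue $\Lambda_n$ now carries \emph{two} families of $2n+1$ eigenfields, $\{w_n^k\}$ and $\{v_n^k\}$, rather than one; this doubling of the multiplicity is exactly what doubles the constant. A simplification is that, since $-\mathbf{\Delta}\ge2I$ is strictly positive, no zero mode need be excluded: one works directly with $-\mathbf{\Delta}=\int_0^\infty P_E^\perp\,dE$, where $P_E^\perp$ projects onto the eigenfields with $\lambda_j\ge E$ (so $P_E^\perp=I$ for $E<2$) and $P_E=I-P_E^\perp$. With $\varGamma=\sum_{j=1}^N(\cdot,u_j)u_j$, the integration-by-parts identity~\eqref{byparts} gives $\Tr(-\mathbf{\Delta})\varGamma=\sum_{j=1}^N(\|\rot u_j\|^2+\|\div u_j\|^2)$, the right-hand side of~\eqref{orthvec}, and the cyclic trace computation of~\eqref{intint} yields
\[
\sum_{j=1}^N(\|\rot u_j\|^2+\|\div u_j\|^2)=\int_{\mathbb{S}^2}\int_0^\infty\rho_{P_E^\perp\varGamma P_E^\perp}(s)\,dE\,dS.
\]

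Next I would repeat the spherical-cap estimate~\eqref{|B|} and the bound $\|\varGamma P_E\chi_B\|_{\mathrm{HS}}^2\le\|P_E\chi_B\|_{\mathrm{HS}}^2$ coming from $\|\varGamma\|\le1$. The one new point is the analogue of~\eqref{2nplus1}: since $P_E$ now projects onto the span of \emph{both} families with $n(n+1)<E$, summing the two identities in~\eqref{id-vec} gives
\[
\|P_E\chi_B\|_{\mathrm{HS}}^2=\sum_{n(n+1)<E}\int_B\Bigl(\sum_{k=1}^{2n+1}|w_n^k|^2+\sum_{k=1}^{2n+1}|v_n^k|^2\Bigr)dS=\frac{|B|}{4\pi}\sum_{n(n+1)<E}2(2n+1).
\]
The extra factor $2$ is the whole difference from the scalar case; the elementary bound $\sum_{n(n+1)<E}(2n+1)\le\frac32E$ for $E\ge2$ then gives $\|P_E\chi_B\|_{\mathrm{HS}}^2\le\frac{3}{4\pi}|B|E$. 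Letting $|B|\to0$ produces~\eqref{estwithplus} with the doubled constant $C=3/(4\pi)$, valid for all $E\ge0$ because $P_E^\perp=I$ for $E<2$. Integrating $\int_0^\infty(\rho(a)^{1/2}-C^{1/2}E^{1/2})_+^2\,dE=\tfrac1{6C}\rho(a)^2=\tfrac{2\pi}9\rho(a)^2$ and then over $\mathbb{S}^2$ delivers~\eqref{orthvec}.

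For the refinement~\eqref{orthvecsol}, if $\div u_j=0$ for every $j$ then the family lies in the divergence-free subspace of $L_2(T\mathbb{S}^2)$; as $\mathbb{S}^2$ is simply connected this subspace is spanned precisely by $\{w_n^k\}$, the $v_n^k$ being gradients and hence non-solenoidal. Restricting $-\mathbf{\Delta}$ there makes $P_E$ project onto $\{w_n^k:n(n+1)<E\}$ only, so just the first identity in~\eqref{id-vec} is used, the factor $2$ disappears, $C$ reverts to $3/(8\pi)$, and one obtains $\int_{\mathbb{S}^2}\rho^2\,dS\le\frac9{4\pi}\sum_{j=1}^N\|\rot u_j\|^2$ after using $\div u_j=0$ in~\eqref{byparts}; the case $\rot u_j=0$ is identical with $\{v_n^k\}$ in place of $\{w_n^k\}$. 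I expect the only step needing care to be this Hodge-type identification of the divergence-free (resp.\ curl-free) subspace with the span of a single eigenfield family; the remainder is a transcription of the scalar argument with the multiplicity $2n+1$ replaced by $2(2n+1)$.
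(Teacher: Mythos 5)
Your proposal is correct and follows essentially the same route as the paper: the same Rumin-type argument with $-\mathbf{\Delta}=\int_0^\infty P_E^\perp\,dE$, the doubled multiplicity from the two eigenfield families via~\eqref{id-vec} giving $C=3/(4\pi)$, and the restriction to the span of $\{w_n^k\}$ (resp.\ $\{v_n^k\}$) for the solenoidal (resp.\ curl-free) case recovering $C=3/(8\pi)$. The paper only sketches the second part as ``similar,'' and your Hodge-decomposition justification of why the divergence-free subspace is exactly the span of the $w_n^k$ is the right way to fill that in.
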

\begin{proof} We prove~\eqref{orthvec}, the proof of \eqref{orthvecsol}
is  similar.
For $E\ge0$  we  set
$$
P_E=\sum_{\lambda_j<E}\bigl((\cdot,w_j)w_j+(\cdot,v_j)v_j\bigr),\quad
P_E^\perp=\sum_{\lambda_j\ge E}\bigl((\cdot,w_j)w_j+(\cdot,v_j)v_j\bigr).
$$
Then as before
$$
-\mathbf{\Delta}=\int_0^\infty\sum_{\lambda_j\ge E}\bigl((\cdot,w_j)w_j+(\cdot,v_j)v_j\bigr)dE=\int_0^\infty P_E^\perp dE.
$$
The finite rank operator  $\varGamma$ is now
$
\varGamma=\sum_{j=1}^N(\cdot,u_j)u_j,
$
and in view of \eqref{byparts} equality~\eqref{intint} goes over to
$$
\aligned
\sum_{j=1}^N\bigl(\|\div u_j\|^2+\|\rot u_j\|^2\bigr)=
\Tr(-\mathbf{\Delta})\varGamma=
\int_{\mathbb{S}^2}\int_0^\infty\rho_{P_E^\perp\varGamma P_E^\perp}(s)dE dS,
\endaligned
$$
where
$$
\rho_{P_E^\perp\varGamma P_E^\perp}(s)=\sum_{j=1}^N\|P_E^\perp u_j\|^2|u_j(s)|^2.
$$
Next, \eqref{|B|} is formally unchanged and \eqref{2nplus1} in view of~\eqref{id-vec}
goes over to
$$
\aligned
\| P_E\chi_B\|_{\mathrm{HS}}^2=\!
\sum_{\lambda_j<E}\int_{\mathbb{S}^2}\bigl(|w_j(s)|^2+|v_j(s)|^2\bigr)\chi_B(s)dS=
\frac2{4\pi}|B|\!\sum_{n=1}^{n(n+1)<E}\!(2n+1),
\endaligned
$$
and we complete the proof as in Theorem~\ref{Th:S2} with $C=3/(4\pi)$.
\end{proof}

\subsection*{Lieb--Thirring inequality on domains on $\mathbb{S}^2$}
In conclusion we consider Lieb--Thirring inequalities for scalar functions  in a domain
on $\mathbb{S}^2$ with Dirichlet boundary conditions. Let $\{\psi_j\}_{j=1}^N\in H^1_0(\Omega)$
be an orthonormal family in $L_2(\Omega)$,
where $\Omega\Subset\mathbb{S}^2$ is a proper domain on $\mathbb{S}^2$ with
$|\Omega|<4\pi$. Of course, the Lieb--Thirring inequality holds for this system,
and we find below an estimate for the constant.

We extend the $\psi_j$'s by zero to the whole sphere (preserving the same notation).
The functions $\psi_j$, however, do not necessarily have zero average. Therefore we
consider the operator
$$
A=-\Delta+aI,\qquad a>0.
$$
on the whole space $L_2(\mathbb{S}^2)$ (without the orthogonality condition).
Then $\mathrm{infspec}A=a$. We also include the eigenvalue $\lambda_0=\Lambda_0=0$
into~\eqref{not}, so that it goes over to
\begin{equation}\label{not0}
\{y_i\}_{i=0}^\infty=\{
Y_n^1,\dots,Y_n^{2n+1}\}_{n=0}^\infty,\quad
\{\lambda_i\}_{i=0}^\infty=
\underset{\!2n+1\ \text{times}}{\{\Lambda_n,
\dots,\Lambda_n\}_{n=0}^{\infty}}.
\end{equation}
Following the proof of Theorem~\ref{Th:S2} for $E\ge0$   the projections are now
$$
P_E=\sum_{\lambda_j+a<E}(\cdot,y_j)y_j\quad\text{and}\quad
P_E^\perp=\sum_{\lambda_j+a\ge E}(\cdot,y_j)y_j.
$$
Then
$$
A=\sum_{j=0}^\infty(\lambda_j+a)(\cdot,y_j)y_j,
$$
and
\begin{equation}\label{Eperp1}
A=\int_0^\infty\sum_{\lambda_j+a\ge E}(\cdot,y_j)y_jdE=\int_0^\infty P_E^\perp dE.
\end{equation}
As in Theorem~\ref{Th:S2} the operator $\varGamma=\sum_{j=1}^N(\cdot,\psi_j)\psi_j$
and~\eqref{intint} goes over to
$$
\aligned
\sum_{j=1}^N\bigl(\|\nabla\psi_j\|^2+a\|\psi_j\|^2\bigr)=
\Tr A\varGamma=
\int_{\mathbb{S}^2}\int_0^\infty\rho_{P_E^\perp\varGamma P_E^\perp}(s)dE dS.
\endaligned
$$
Inequality \eqref{|B|} is formally as before  and \eqref{2nplus1} becomes
$$
\aligned
\| P_E\chi_B\|_{\mathrm{HS}}^2=
\sum_{\lambda_j+a<E}\int_{\mathbb{S}^2}y_j(s)^2\chi_B(s)dS=
\frac1{4\pi}|B|\!\sum_{n=0}^{n(n+1)+a<E}\!(2n+1)\le\\\le
\frac1{4\pi}|B|\bigl(N(E,a)+1\bigr)^2=
\frac1{4\pi}|B|\frac{\bigl(\sqrt{1+4(E-a)}+1\bigr)^2}4\le
\frac1{4\pi}|B|E\,F(a),
\endaligned
$$
where $N(E,a)$ is the root of the equation $n(n+1)-(E-a)=0$ and where
\begin{equation}\label{F(a)}
F(a):=\max_{E\ge a}\frac{\bigl(\sqrt{1+4(E-a)}+1\bigr)^2}{4E}=
\left\{
  \begin{array}{ll}
    1/a, & \hbox{$a\le 1/2$;} \\
    {4a}/{(4a-1)},\ & \hbox{$a\ge 1/2$.}
  \end{array}
\right.
\end{equation}
We can now complete the argument as in Theorem~\ref{Th:S2} to obtain the
inequality
\begin{equation}\label{mina}
\aligned
\int_{\Omega}\rho(s)^2dS\le\frac{3F(a)}{2\pi}\sum_{j=1}^N\bigl(\|\nabla\psi_j\|^2+a\|\psi_j\|^2\bigr)
\le\\\le
\frac{3F(a)}{2\pi}\bigl(1+a\lambda_1(\Omega)^{-1}\bigr)\sum_{j=1}^N\|\nabla\psi_j\|^2,
\endaligned
\end{equation}
where $\lambda_1(\Omega)$ is the first eigenvalue of the Dirichlet Laplacian in
$\Omega$, for which we have the following lowed bound obtained in \cite[Corollary~3.2]{I-L-IMRN}:
$$
\lambda_1(\Omega)\ge\frac{2\pi}{|\Omega|}\,\left(1-\frac{|\Omega|}{4\pi}\right).
$$
Setting $a=1$ (so that $F(a)=4/3$)
we have proved the following result.
\begin{theorem}\label{Th:S2Omega}
Let $\Omega$ be a domain on $\mathbb{S}^2$ with $|\Omega|<|\mathbb{S}^2|=4\pi$.
Let $\{\psi_j\}_{j=1}^N\in H^1_0(\Omega)$ be an orthonormal family of scalar functions. Then $\rho(s):=\sum_{j=1}^N\psi_j(s)^2$
satisfies the inequality
\begin{equation}\label{LTS2Omega}
\int_{\Omega}\rho(s)^2dS\le k(\Omega)
\sum_{j=1}^N\|\nabla\psi_j\|^2_{L^2(\Omega)}, \quad k(\Omega)\le\frac2{\pi}\,\frac{4\pi+|\Omega|}{4\pi-|\Omega|}.
\end{equation}
\end{theorem}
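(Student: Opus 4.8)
The plan is to follow the Rumin-type scheme of Theorem~\ref{Th:S2}, with two modifications forced by the Dirichlet setting. First, after extending each $\psi_j$ by zero to all of $\mathbb{S}^2$ (which preserves orthonormality in $L_2$ and membership in $H^1$), the extended family need no longer be orthogonal to constants, so the zero mode $\lambda_0=0$ cannot be discarded. To keep a strictly positive operator I would replace $-\dot\Delta$ by the shifted operator $A=-\Delta+aI$ for a parameter $a>0$, whose layer-cake representation $A=\int_0^\infty P_E^\perp\,dE$ now uses projections onto the range $\{\lambda_j+a\ge E\}$ with the full numbering \eqref{not0} including $\lambda_0$.

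With this in place the trace identity and the spherical-cap Hilbert--Schmidt estimate go through verbatim, the only change being in the eigenvalue count. First I would reproduce $\Tr A\varGamma=\int_{\mathbb{S}^2}\int_0^\infty\rho_{P_E^\perp\varGamma P_E^\perp}\,dE\,dS$, whose left-hand side equals $\sum_j(\|\nabla\psi_j\|^2+a\|\psi_j\|^2)$ after integration by parts. Then, using the key identity \eqref{identity}, the cap bound becomes $\|P_E\chi_B\|_{\mathrm{HS}}^2\le\frac{|B|}{4\pi}\sum_{n(n+1)+a<E}(2n+1)$; counting the shifted eigenvalues and maximizing produces the factor $F(a)$ of \eqref{F(a)} in place of the constant $3/2$. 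Integrating the resulting pointwise lower bound for $\rho_{P_E^\perp\varGamma P_E^\perp}$ in $E$ and then over $\Omega$ yields exactly inequality \eqref{mina}.

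It then remains to convert the $a\|\psi_j\|^2$ term into a gradient term and to fix $a$. The Dirichlet Poincar\'e inequality gives $a\|\psi_j\|^2\le a\,\lambda_1(\Omega)^{-1}\|\nabla\psi_j\|^2$, and inserting the lower bound $\lambda_1(\Omega)\ge\frac{2\pi}{|\Omega|}(1-|\Omega|/(4\pi))$ from \cite[Corollary~3.2]{I-L-IMRN} controls $1+a\lambda_1(\Omega)^{-1}$. Taking $a=1$, so that $F(1)=4/3$ and the prefactor $3F(a)/(2\pi)$ collapses to $2/\pi$, and then simplifying $1+\lambda_1(\Omega)^{-1}\le(4\pi+|\Omega|)/(4\pi-|\Omega|)$ delivers the stated constant.

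The genuine point of the argument---and the main obstacle---is the handling of the zero mode through the shift $a$: on the whole sphere the inequality cannot hold without excluding constants, and this is precisely what forces the factor $(4\pi-|\Omega|)^{-1}$ and the blow-up as $\Omega\to\mathbb{S}^2$. The choice $a=1$ is clean and near-optimal; minimizing $F(a)(1+a\lambda_1(\Omega)^{-1})$ over $a$ would sharpen the constant slightly, but at the cost of an $\Omega$-dependent optimizer. The only place requiring care is getting the shifted counting function right so that the cap estimate still yields the clean $E\cdot F(a)$ bound; everything else is an immediate transcription of the proof of Theorem~\ref{Th:S2}.
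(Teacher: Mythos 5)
Your proposal is correct and reproduces the paper's own argument essentially step for step: extension by zero, the shifted operator $A=-\Delta+aI$ with the layer-cake representation including the zero mode, the cap estimate yielding the factor $F(a)$, the Poincar\'e inequality with the lower bound for $\lambda_1(\Omega)$ from \cite{I-L-IMRN}, and the choice $a=1$ giving $F(1)=4/3$ and the prefactor $2/\pi$. Even your closing remark about optimizing over $a$ matches the paper's Remark~2.2.
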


\begin{remark}
{\rm
The constant in~\eqref{LTS2Omega} blows up as $|\Omega|\to 4\pi$ as it should,
since the Lieb--Thirring inequality for scalar functions cannot hold on the whole sphere
without the exclusion on the zero mode. This does not happen in the vector case,
since the vector Laplacian is positive definite, and for any
$\Omega\subseteq\mathbb{S} ^2$
and an orthonormal family $\{u_j\}_{j=1}^N\in H^1_0(\Omega,T\mathbb{S}^2)$
extension by zero shows that
the corresponding Lieb--Thirring
constants are uniformly bounded by the constant on the whole sphere.
}
\end{remark}
\begin{remark}
{\rm
In~\eqref{LTS2Omega} we have just set $a=1$, while of course
$$
k(\Omega)\le \frac3{2\pi}\min_{a\ge 1/2}
F(a)\bigl(1+a\lambda_1(\Omega)^{-1}\bigr).
$$
Given~\eqref{F(a)} we can find the minimum explicitly.
Avoiding writing a  long expression we just write
the  asymptotic behaviour of our estimate of the constant  in the two regimes:
$$
k(\Omega)\to \frac3{2\pi}\ \text{as}\ |\Omega|\to0;\ \
k(\Omega)\sim \frac{12}{4\pi-|\Omega|}\ \text{as}\ |\Omega|\to4\pi.
$$
However, setting $a=1/2$ in~\eqref{mina} we see that
$k(\Omega)\le 12/(4\pi-|\Omega|)$ for all $|\Omega|<4\pi$.
It is also worth pointing out that $3/{(2\pi)}$ is the estimate of the
Lieb--Thirring constant in $\mathbb{R}^2$ by the approach of
\cite{Rumin1}, \cite{Rumin2},  (see also \cite{lthbook}).
}
\end{remark}

\subsection*{Lieb--Thirring inequality on the torus}
The case of the square 2D torus $\mathbb{T}^2=[0,2\pi]^2$
is treated in exactly the same way as in Theorem~\ref{Th:S2}. The orthonormal eigenfunctions
of $-\Delta$ in the space of functions with zero average are
$$
\frac1{2\pi}e^{ik\cdot x},\quad k\in\mathbb{Z}^2_0=\{k=(k_1,k_2)\in\mathbb{Z}^2, |k|^2>0\}
$$
with eigenvalues
$k_1^2+k_2^2$. The first eigenvalue is $1$ and for $E\ge1$ the number $N(E)$ of eigenvalues
less than or equal to $E$ satisfies
\begin{equation}\label{NE}
N(E)\le 4E.
\end{equation}
In fact, it is well known that $\lim_{E\to\infty}N(E)/E=\pi$.
Therefore it suffices to verify~\eqref{NE} for finitely many
eigenvalues. We have $N(1)=4$, $N(5)=20$ and for other $E$ we have
the strict inequality in~\eqref{NE}.

\begin{theorem}\label{Th:T2}
Let $\{\psi_j\}_{j=1}^N\in H^1(\mathbb{T}^2)$ be an orthonormal family of scalar functions
with zero average: $\int_{\mathbb{T}^2}\psi_j(x)dx=0$. Then $\rho(x):=\sum_{j=1}^N\psi_j(x)^2$
satisfies the inequality
\begin{equation}\label{LTT2}
\int_{\mathbb{T}^2}\rho(x)^2dx\le\frac6{\pi^2}\sum_{j=1}^N\|\nabla\psi_j\|^2.
\end{equation}
\end{theorem}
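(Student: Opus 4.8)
The plan is to run the proof of Theorem~\ref{Th:S2} verbatim, replacing the spherical harmonics by the Fourier basis $\{\phi_k\}_{k\in\mathbb{Z}^2_0}$, $\phi_k=(2\pi)^{-1}e^{ik\cdot x}$, and replacing the pointwise Laplace identity~\eqref{identity} by the trivial flat-torus identity $|\phi_k(x)|^2\equiv(4\pi^2)^{-1}$. Concretely, for $E\ge0$ I would set $P_E=\sum_{|k|^2<E}(\cdot,\phi_k)\phi_k$ and $P_E^\perp=\sum_{|k|^2\ge E}(\cdot,\phi_k)\phi_k$, so that on the subspace of zero-average functions the layer-cake formula gives $-\dot\Delta=\int_0^\infty P_E^\perp\,dE$. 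With $\varGamma=\sum_{j=1}^N(\cdot,\psi_j)\psi_j$ the trace identity~\eqref{intint} carries over unchanged and yields
\begin{equation*}
\sum_{j=1}^N\|\nabla\psi_j\|^2=\int_{\mathbb{T}^2}\int_0^\infty\rho_{P_E^\perp\varGamma P_E^\perp}(x)\,dE\,dx,\qquad \rho_{P_E^\perp\varGamma P_E^\perp}(x)=\sum_{j=1}^N\|P_E^\perp\psi_j\|^2\psi_j(x)^2.
\end{equation*}

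The key step is the pointwise density bound, obtained exactly as in~\eqref{|B|}--\eqref{estwithplus}. Taking a ball $B$ of area $|B|$ about a point $a$ and using $\|\varGamma\|=1$ together with the fact that $\chi_B$ and $P_E$ are projections, I would estimate
\begin{equation*}
\|\varGamma P_E\chi_B\|_{\mathrm{HS}}^2\le\|P_E\chi_B\|_{\mathrm{HS}}^2=\sum_{|k|^2<E}\int_{\mathbb{T}^2}|\phi_k(x)|^2\chi_B(x)\,dx=\frac{|B|}{4\pi^2}\,\#\{k\in\mathbb{Z}^2_0:|k|^2<E\}.
\end{equation*}
Here the constancy of $|\phi_k|^2$ plays the role of identity~\eqref{identity}, and the lattice-counting function is controlled by~\eqref{NE}, $N(E)\le4E$, so that $\|\varGamma P_E\chi_B\|_{\mathrm{HS}}^2\le |B|E/\pi^2$. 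Passing to the limit $|B|\to0$ in the torus version of~\eqref{|B|} then gives~\eqref{estwithplus} with $C=1/\pi^2$.

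Finally I would integrate the lower bound~\eqref{estwithplus} in $E$, using
\begin{equation*}
\int_0^\infty\bigl(\rho(a)^{1/2}-C^{1/2}E^{1/2}\bigr)_+^2\,dE=\frac{1}{6C}\,\rho(a)^2=\frac{\pi^2}{6}\,\rho(a)^2,
\end{equation*}
and conclude by integrating over $\mathbb{T}^2$ to obtain $\sum_{j=1}^N\|\nabla\psi_j\|^2\ge\frac{\pi^2}{6}\int_{\mathbb{T}^2}\rho(x)^2\,dx$, which is~\eqref{LTT2}. The only genuinely new ingredient relative to Theorem~\ref{Th:S2} is the counting bound $N(E)\le4E$, which plays exactly the role that the elementary estimate $\sum_{n(n+1)<E}(2n+1)\le\tfrac32 E$ played on the sphere; this is the main point to verify and hence the expected obstacle. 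Since Weyl's law gives $N(E)/E\to\pi<4$, the inequality can fail only for small $E$, and checking the first few shells (with equality exactly at $E=1$, where $N(1)=4$, and at $E=5$, where $N(5)=20$) confirms it, as recorded before the statement.
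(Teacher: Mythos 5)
Your proposal is correct and follows essentially the same route as the paper's proof: it runs the spectral-projection (Rumin) argument of Theorem~\ref{Th:S2} verbatim, with the constancy of $|\phi_k|^2=(4\pi^2)^{-1}$ playing the role of identity~\eqref{identity} and the lattice count $N(E)\le 4E$ from~\eqref{NE} replacing the estimate $\sum_{n(n+1)<E}(2n+1)\le\tfrac32E$, which gives $C=1/\pi^2$ and hence the constant $6/\pi^2$. No gaps.
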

\begin{proof}
Up to \eqref{|B|} the proof is the same as in Theorem~\ref{Th:S2}.
Setting now $E\ge1$, in view of~\eqref{NE} we have
$$
\aligned
\| P_E\chi_B\|_{\mathrm{HS}}^2=\frac1{4\pi^2}
\sum_{k\in\mathbb{Z}_0^2,\, |k|^2<E}\int_{\mathbb{T}^2}
\chi_B(x)dx\le\frac1{4\pi^2}N(E)|B|\le\frac1{\pi^2}E|B|,
\endaligned
$$
and we obtain the analog of~\eqref{estwithplus} with $C=1/\pi^2$.
The completion of the proof is again exactly the same.
\end{proof}

\begin{remark}\label{R:T2}
{\rm
The case of multidimensional anisotropic torus with arbitrary
periods was studied in~\cite{I-L-MS} under a stronger orthogonality
condition that the functions have  zero average with respect to the
shortest coordinate. Then, for example, in the 2D case the constant
is $\pi/6<6/\pi^2$ and is independent of the aspect ratio.
}
\end{remark}

\begin{remark}\label{R:Lad-ineq}
{\rm
It is also worth pointing out that the 2D Lieb--Thirring
inequalities for one function, that is, for $N=1$ turn into
multiplicative inequality of the form
$$
\|\psi\|_{L^4}^4\le k\|\psi\|^2\|\nabla\psi\|^2,
$$
where $k$ is the corresponding Lieb--Thirring constant. In the
context of the Navier--Stokes equations this inequality is called
the Ladyzhenskaya inequality and is important both in the theory,
and applications.  For example, the  estimate for $k$ specifically
for this inequality on the torus $\mathbb{T}^2$ in~\cite{k-Foias}
is $1/(2\pi^2)+1/({\sqrt{2}\pi})+2=2.27\dots$, while
 \eqref{itemtor}  gives $6/\pi^2=0.60\dots$.
 }
\end{remark}

\setcounter{equation}{0}
\section{  Navier--Stokes system  in a domain on $\mathbb{S}^2$}\label{sec3}

We now consider the  Navier--Stokes system in a domain $\Omega\subset\mathbb{S}^2$,
see~\eqref{NS} and write it as an evolution
equation
\begin{equation}\label{NSE}
\partial_t u+\nu A u+B(u,u)=f,\qquad u(0)=u_0,
\end{equation}
in the phase space $H$, which is the completion in $L_2(\Omega,T\mathbb{S}^2)$
of the linear set of smooth divergence free tangent vector
functions compactly supported in $\Omega$. Let $P$ be the corresponding
orthogonal projection.
Then $A=-P\boldsymbol{\Delta}$ is the Stokes operator and
$B(u,v)=P(\nabla_uv)$ is the bilinear term defined by duality as follows:
for all $u,v,w\in H^1_0(\Omega,T\mathbb{S}^2)\cap H$
$$
\aligned
&\langle Au,v\rangle=(\rot u,\rot v),\\
&\langle B(u,v),w\rangle=\int_{\Omega}
\nabla_{u(s)}v(s)\cdot w(s)\,dS=:b(u,v,w).
\endaligned
$$

Equation (\ref{NSE})
generates the semigroup of solution operators $S_t\colon H\to H$, $S_tu_0=u(t)$
which  has a compact global attractor
$\mathcal{A}\Subset H$. The case of whole sphere
(or a two-dimensional manifold without boundary)
was studied in \cite{I90}, \cite{I93}. The fact that
$\Omega$ is not the whole sphere does not play a role here
(see, however, Remark~\ref{R:wholeS2}). The
existence of the attractor is shown similarly to the
case of a bounded domain in $\mathbb{R}^2$ with Dirichlet boundary conditions
  (see, for instance, \cite{B-V},
 \cite{T} for the case
of a domain with  smooth boundary  and \cite{LadNon},
\cite{Rosa} for a nonsmooth domain). The attractor
$\mathcal{A}$ is the maximal strictly invariant compact  set.

\begin{theorem}\label{T:dim}
The fractal dimension of  $\mathscr{A}$ satisfies the estimate
\begin{equation}\label{dim}
\dim_F\mathscr{A}\le \frac3{2(2\pi)^{3/2}}\,\frac{\|f\||\Omega|}{\nu^2}\,.
\end{equation}
\end{theorem}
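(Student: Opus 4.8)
The plan is to bound $\dim_F\mathscr{A}$ by the standard volume-contraction (Constantin--Foias--Temam) method: estimate the trace of the linearization of the right-hand side of \eqref{NSE} on $d$-dimensional subspaces, time-average this trace along trajectories lying on $\mathscr{A}$, and then locate the least $d$ for which the averaged trace is non-positive. Writing $F(u)=-\nu Au-B(u,u)+f$, the linearization is $F'(u)w=-\nu Aw-B(w,u)-B(u,w)$, and for an orthonormal family $\{v_j\}_{j=1}^d\subset H$ (so each $v_j$ is divergence free) the trace is
\[
\Tr(F'(u)\circ Q_d)=-\nu\sum_{j=1}^d\|\rot v_j\|^2-\sum_{j=1}^d b(v_j,u,v_j),
\]
because $b(u,v_j,v_j)=\tfrac12\int_\Omega u\cdot\nabla|v_j|^2\,dS=0$ by $\div u=0$, and $\langle Av_j,v_j\rangle=\|\rot v_j\|^2$ by \eqref{byparts}.

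The heart of the estimate is the surviving nonlinear term. Since $b(v_j,u,v_j)=\int_\Omega (\nabla_k u_i)\,v_j^k v_j^i\,dS$ pairs $\nabla u$ against the symmetric tensor $v_j\otimes v_j$, only the symmetric part $\mathrm{Def}\,u$ of $\nabla u$ contributes; summing over $j$ gives $\sum_j b(v_j,u,v_j)=\int_\Omega \mathrm{Def}\,u:\Theta\,dS$ with $\Theta=\sum_j v_j\otimes v_j\succeq0$ and $\mathrm{tr}\,\Theta=\rho=\sum_j|v_j|^2$. Hence $|\mathrm{Def}\,u:\Theta|\le|\mathrm{Def}\,u|\,\rho$ pointwise, and by Cauchy--Schwarz together with the solenoidal Lieb--Thirring inequality \eqref{orthvecsol},
\[
\Bigl|\sum_{j=1}^d b(v_j,u,v_j)\Bigr|\le\|\mathrm{Def}\,u\|\,\Bigl(\int_\Omega\rho^2\,dS\Bigr)^{1/2}\le\|\mathrm{Def}\,u\|\Bigl(\frac9{4\pi}\sum_{j=1}^d\|\rot v_j\|^2\Bigr)^{1/2}.
\]
The key identity, which I would establish by a Weitzenb\"och/Korn computation on $\mathbb{S}^2$ (integrate $\int\nabla_i u_j\,\nabla_j u_i$ by parts and use $\mathrm{Ric}=g$), is that for divergence-free $u$ one has $\|\mathrm{Def}\,u\|^2=\tfrac12\|\rot u\|^2-\|u\|^2\le\tfrac12\|\rot u\|^2$; note the curvature term has the favourable sign, so the flat-space factor $\tfrac12$ survives as an inequality on the sphere.

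Writing $Y=\sum_j\|\rot v_j\|^2$, the two displays above and Young's inequality give the pointwise bound $\Tr(F'(u)\circ Q_d)\le-\tfrac{\nu}{2}Y+\tfrac{9}{16\pi\nu}\|\rot u\|^2$, while the Li--Yau bound \eqref{LiYau} yields $Y\ge\sum_{k=1}^d\lambda_k\ge\tfrac{2\pi}{|\Omega|}d^2$. Time-averaging along $\mathscr{A}$ and inserting the enstrophy bound from the energy balance for \eqref{NSE} — from $\nu\langle\|\rot u\|^2\rangle=\langle(f,u)\rangle\le\|f\|\lambda_1^{-1/2}\langle\|\rot u\|^2\rangle^{1/2}$, with $\lambda_1\ge2\pi/|\Omega|$ (again \eqref{LiYau} at $n=1$), one gets $\langle\|\rot u\|^2\rangle\le\tfrac{|\Omega|\|f\|^2}{2\pi\nu^2}$ — produces $q_d\le-\tfrac{\pi\nu}{|\Omega|}d^2+\tfrac{9|\Omega|\|f\|^2}{32\pi^2\nu^3}$. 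The dimension estimate then gives $\dim_F\mathscr{A}\le d_*$ with $q_{d_*}=0$, and solving $\tfrac{\pi\nu}{|\Omega|}d_*^2=\tfrac{9|\Omega|\|f\|^2}{32\pi^2\nu^3}$ reproduces exactly the constant $\tfrac{3}{2(2\pi)^{3/2}}$ of \eqref{dim}.

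The main obstacle I expect is the sharp nonlinear estimate: hitting the stated constant hinges on (i) passing to the deformation tensor rather than the full gradient, and (ii) the curvature-corrected Korn identity $\|\mathrm{Def}\,u\|^2\le\tfrac12\|\rot u\|^2$ on $\mathbb{S}^2$. Dropping the factor $\tfrac12$ here (e.g.\ using $\|\nabla u\|^2\le\|\rot u\|^2$ instead) costs a factor $\sqrt2$ and misses \eqref{dim}. A secondary technical point is to invoke the Constantin--Foias--Temam dimension bound in the form valid when the trace majorant is concave in $d$ — which it is, through the $d^2$ coming from \eqref{LiYau} — so that the non-integer value $d_*$ is an admissible bound.
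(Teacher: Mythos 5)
Your proposal is correct and follows essentially the same route as the paper: the Constantin--Foias--Temam trace estimate combined with the solenoidal Lieb--Thirring inequality \eqref{orthvecsol} ($c_{\mathrm{LT}}\le 9/(4\pi)$), the Li--Yau bound \eqref{LiYau} ($c_{\mathrm{LY}}=2\pi$, also used for $\lambda_1$), Young's inequality, and the standard time-averaged enstrophy bound, yielding the identical constant $3/(2(2\pi)^{3/2})$. The only difference is that you derive the key estimate $\bigl|\sum_j b(v_j,u,v_j)\bigr|\le 2^{-1/2}\|\rho\|\,\|\rot u\|$ from scratch via the deformation tensor and the curvature-corrected Korn identity (which is correct: the Ricci contributions cancel so that $\|\mathrm{Def}\,u\|^2\le\tfrac12\|\rot u\|^2$ for divergence-free fields vanishing on $\partial\Omega$), whereas the paper simply cites this as \cite[Lemma~3.2]{IMT}.
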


\begin{proof}
Following the  general scheme
we have to estimate the $m$-trace of the semigroup
linearized  on the solution lying on the attractor
\cite{T}.
The semigroup $S_t$ is uniformly
 differentiable in $H$
with differential $L(t,u_0)\colon\xi\to U(t)\in~H$,
where $U(t)$ is the solution of the variational equation
\begin{equation}\label{var-eq}
\partial_tU=-\nu AU- B(U,u(t))-B(u(t),U)=:{\mathcal L}(t,u_0)U, \qquad U(0)=\xi.
\end{equation}
We estimate the numbers $q(m)$ (the sums of the first $m$
global Lyapunov exponents):
\begin{equation}\label{def-q(m)}
q(m):=\limsup_{t\to\infty}\,\sup_{u_0\in {\mathcal A}}\,\sup_{\{v_j\}_{j=1}^m} \frac{1}t
\int_0^t \sum_{j=1}^m({\mathcal L}(\tau,u_0)v_j,v_j)\,d\tau,
\end{equation}
where $\{v_j\}_{j=1}^m\in H^1_0(\Omega,T\mathbb{S}^2)\cap H$ is an arbitrary
orthonormal system
(see \cite{B-V}, \cite{C-F85},  \cite{T}). Using below the well-known identity
$b(u,v,v)=0$ and the estimate
$$
\sum_{j=1}^m b(v_j,u,v_j)\le
2^{-1/2}\|\rho\|\|\rot u\|,
$$
where
$\rho(s)=\sum_{j=1}^m|v_j(s)|^2$,
see \cite[Lemma~3.2]{IMT}, we obtain also using~\eqref{byparts}
\begingroup
\allowdisplaybreaks
\begin{align*}
&\sum_{j=1}^m({\mathcal L} (t,u_0)v_j,v_j) = -\nu\sum_{j=1}^m\|\rot v_j\|^2-
\sum_{j=1}^m b(v_j,u(t),v_j)\le\\
&\qquad
\le-\nu\sum_{j=1}^m\|\rot v_j\|^2+
2^{-1/2}\|\rho\|\|\rot u(t)\|\le\\
&\qquad\le-\nu\sum_{j=1}^m\|\rot v_j\|^2+ 2^{-1/2}
\biggl(c_{\mathrm{LT}} \sum_{j=1}^m\|\rot v_j\|^2\biggr)^{1/2}\|\rot u(t)\|\le\\
&\qquad\le-\frac\nu2\sum_{j=1}^m\|\rot v_j\|^2+ \frac{c_\mathrm{LT}}{4\nu}\|\rot u(t)\|^2 \le
-\frac{\nu c_\mathrm{LY}m^2}{2|\Omega|}+ \frac{c_\mathrm{LT}}{4\nu}\|\rot u(t)\|^2.
\end{align*}
\endgroup
Here we used the Lieb--Thirring inequality~\eqref{orthvecsol} with
$c_{\mathrm{LT}}\le 9/(4\pi)$ and inequality
\eqref{LiYau}, which gives by the variational principle that
for  the orthonormal family $\{v_j\}_{j=1}^m\in H^1_0(\Omega,T\mathbb{S}^2)\cap H$
 it holds
\begin{equation}\label{n=2}
\sum_{k=1}^m\|\rot v_k\|^2\ge \sum_{k=1}^m\lambda_k\ge
\frac {c_{\mathrm{LY}}m^2}{|\Omega|},\qquad c_{\mathrm{LY}}=2\pi,
\end{equation}
where $\lambda_k$ are the eigenvalues of the Stokes operator~\eqref{Stokessmooth}.
Using the well-known estimate~\cite{B-V}, \cite{T}
$$
\limsup_{t\to\infty}\sup_{u_0\in\mathscr{A}} \frac 1t\int_0^t\|\rot u(\tau)\|^2d\tau
\le\frac{\|f\|^2}{\lambda_1\nu^2},
$$
for the solutions lying on the attractor we
obtain  for the numbers $q(m)$:
$$
q(m)\le -\frac{\nu c_{\mathrm{LY}}m^2}{2|\Omega|}+ \frac{c_{\mathrm{LT}}\|f\|^2}{4\lambda_1\nu^3}
\le-\frac{\nu c_{\mathrm{LY}}m^2}{2|\Omega|}+
\frac{c_{\mathrm{LT}}\|f\|^2|\Omega|}{4c_{\mathrm{LY}}\nu^3}\,.
$$
It was shown in~\cite{C-F85}, \cite{T}
and in~\cite{Ch-I2001}, \cite{Ch-I}, respectively,
that both the Hausdorff and fractal dimensions of $\mathscr{A}$
are bounded by the number $m_*$ for which
$q(m_*)=0$.
This gives that
$$
\dim_F\mathscr{A}\le \frac1{c_{\mathrm{LY}}}\biggl(\frac{c_{\mathrm{LT}}}{2}\biggr)^{1/2}
\frac{\|f\||\Omega|}{\nu^2}\,
$$
and completes the proof since $c_{\mathrm{LY}}=2\pi$ and $c_{\mathrm{LT}}\le9/(4\pi)$.
\end{proof}
\begin{remark}\label{R:R2S2}
{\rm
In terms of the physical parameters estimate~\eqref{dim}
is the same as \eqref{dimR2} in the case $\Omega\subset\mathbb{R}^2$,
so that the (constant) curvature does not seem to play a role. Furthermore,
the numerical coefficients are different only due to the fact that
the current estimate for the Lieb--Thirring constant for
orthonormal families of two-dimensional divergence free vector
functions in $\Omega\subset\mathbb{R}^2$ is $1/(2\sqrt{3})$ (see~\cite{D-L-L}, \cite{Il-Stokes}),
which is  better than our estimate for the sphere $9/(4\pi)$,
while, as mentioned above,  the constants in the  Li-Yau lower bounds for the Stokes operator
in $\Omega\subset\mathbb{R}^2$ and $\Omega\subset\mathbb{S}^2$
are the same.
}
\end{remark}

\begin{remark}\label{R:wholeS2}
{\rm
In the case when $\Omega=\mathbb{S}^2$
(or, more generally, when $\Omega$ is a 2D
manifold without boundary)  we have an additional
important orthogonality relation
$$
b(u,u,Au)=0,
$$
which is the same as in the 2D space periodic case. This makes it possible to improve the estimate
of the dimension in terms of the physical parameters and obtain that
as in the case of the torus $\mathbb{T}^2$ \cite{T} it holds (see~\cite{I93, I94})
$$
\dim_F\mathscr{A}\le c\, G^{2/3}(1+\log G)^{1/3}, \qquad G=\frac{\|f\||\Omega|}{\nu^2}\,.
$$
}
\end{remark}

\subsection*{Acknowledgements}\label{SS:Acknow}
A.I.  acknowledges the warm hospitality of the Mit\-tag--Leffler
Institute where this work has started. The work of A.I.
 was supported by  Programme no.1 of the Russian Academy of Sciences.
 A.L. was partially
funded  by the grant of the Russian Federation Government
to support research under the supervision of a leading
scientist at the Siberian Federal University, 14.Y26.31.0006.

\end{document}